\title{Pointwise inequalities for Sobolev functions on outward cuspidal domains}
\author{Sylvester Eriksson-Bique, Pekka Koskela, Jan Mal\'y 
and Zheng Zhu}
\address{Sylvester Eriksson-Bique\\
Department of Mathematics\\
University of California,
Los Angeles, Box 95155, Los Angeles, CA-90095-1555, USA}
\email{\tt syerikss@math.ucla.edu}
\address{Pekka Koskela\\
Department of Mathematics and Statistics\\
University of Jyv\"askyl\"a, P.O. Box 35 (MaD),
FI-40014, Jyv\"askyl\"a, Finland}
\email{\tt pekka.j.koskela@jyu.fi}
\address{Jan Mal\'y\\
Department of Mathematics\\
Faculty of Science,
 J. E. Purkyn\v e University\\
\v Cesk\'e ml\'ade\v{z}e 8\\
400 96 \'Ust\'{\i} nad Labem\\
Czech Republic}
\email{jan.maly1@ujep.cz}
\address{Zheng Zhu\\
Department of Mathematics and Statistics\\
University of Jy\"askyl\"a, P.O. Box 35 (MaD),
FI-40014, Jyv\"askyl\"a, Finland}
\email{\tt zheng.z.zhu@jyu.fi}
\numberwithin{equation}{section}
\long\def\colred#1\endred{{\color{red}#1}}
\long\def\colgreen#1\endgreen{{\color{green}#1}}
\long\def\colmagenta#1\endmagenta{{\color{magenta}#1}}
\long\def\colblue#1\endblue{{\color{blue}#1}}
\long\def\colyellow#1\endyellow{{\color{yellow}#1}}
\theoremstyle{plain}
\newtheorem{theorem}[equation]{Theorem}
\newtheorem{lemma}[equation]{Lemma}
\theoremstyle{remark}
\theoremstyle{definition}
\newtheorem{defn}[equation]{Definition}
\newtheorem*{question*}{Question}
\subjclass[2010]{46E35, 30L99}
\thanks{The first author was partially supported by the National Science Foundation under grant \#DMS-1704215. The second and fourth authors have been supported  by the Academy of Finland via Centre of Excellence in Analysis and Dynamics Research (Project \#307333). The fourth author was also supported by the China Scholarship Council fellowship (Project \#201506020103). The authors also are thankful for IMPAN for hosting the semester ``Geometry and analysis in function and mapping theory on Euclidean and metric measure space'' where part of this research was conducted. This work was also partially supported by the grant \#346300 for IMPAN from the Simons Foundation and the matching 2015-2019 Polish MNiSW fund.}
\newcounter{prob}
\def\rr{{\mathbb R}}
\def\rn{{{\rr}^n}}
\def\fz{\infty}
\def\boz{{\Omega}}
\def\bint{{\ifinner\rlap{\bf\kern.25em--}
\int\else\rlap{\bf\kern.45em--}\int\fi}\ignorespaces}
\def\bbint{{\ifinner\rlap{\bf\kern.25em--}
\hspace{0.078cm}\int\else\rlap{\bf\kern.45em--}\int\fi}\ignorespaces}
\def\mp{{M^{1,p}(\boz)}}
\def\wp{{W^{1,p}(\boz)}}
\def\lp{{L^{p}(\boz)}}
\def\r{\right}
\def\lf{\left}
\def\setcolon{;}
\newcommand{\R}{\ensuremath{\mathbb{R}}}
\def\XXint#1#2#3{{\setbox0=\hbox{$#1{#2#3}{\int}$ }
\vcenter{\hbox{$#2#3$ }}\kern-.58\wd0}}
\newcommand{\co}{\mskip0.5mu\colon\thinspace}   
\def\vint_#1{\mathchoice%
        {\mathop{\kern 0.2em\vrule width 0.6em height 0.69678ex depth -0.58065ex
                \kern -0.8em \intop}\nolimits_{\kern -0.4em#1}}%
        {\mathop{\kern 0.1em\vrule width 0.5em height 0.69678ex depth -0.60387ex
                \kern -0.6em \intop}\nolimits_{#1}}%
        {\mathop{\kern 0.1em\vrule width 0.5em height 0.69678ex
            depth -0.60387ex
                \kern -0.6em \intop}\nolimits_{#1}}%
        {\mathop{\kern 0.1em\vrule width 0.5em height 0.69678ex depth -0.60387ex
                \kern -0.6em \intop}\nolimits_{#1}}}
\def\vintslides_#1{\mathchoice%
        {\mathop{\kern 0.1em\vrule width 0.5em height 0.697ex depth -0.581ex
                \kern -0.6em \intop}\nolimits_{\kern -0.4em#1}}%
        {\mathop{\kern 0.1em\vrule width 0.3em height 0.697ex depth -0.604ex
                \kern -0.4em \intop}\nolimits_{#1}}%
        {\mathop{\kern 0.1em\vrule width 0.3em height 0.697ex depth -0.604ex
                \kern -0.4em \intop}\nolimits_{#1}}%
        {\mathop{\kern 0.1em\vrule width 0.3em height 0.697ex depth -0.604ex
                \kern -0.4em \intop}\nolimits_{#1}}}
\begin{document}

\maketitle

\begin{abstract}
We show that the first order Sobolev spaces $W^{1,p}(\Omega_\psi),$ $1<p\leq\fz,$ on cuspidal
symmetric domains $\Omega_\psi$ can be characterized via pointwise inequalities. In particular, 
they coincide with the Haj\l{}asz-Sobolev spaces $M^{1,p}(\Omega_\psi)$.\\
\end{abstract}

\section{Introduction}
Optimal definitions for Sobolev spaces are crucial in analysis. It was a remarkable discovery of Haj\l{}asz \cite{Pitor} that 
distributionally defined Sobolev  functions can be characterized 
using pointwise estimates in the context of Sobolev extension 
domains. This, in part, has played a crucial role in defining Sobolev 
spaces for general metric measure spaces. Here, we show that for 
certain cuspidal domains the pointwise characterization holds without 
any additional assumptions. These domains do not admit extensions for 
Sobolev functions.
Given a domain $\Omega\subset \rn,$ we denote by $W^{1,p}(\Omega),$ $1\le p\le
\infty,$ the usual first order Sobolev space consisting of all functions
$u\in L^p(\Omega)$ whose first order distributional partial derivatives also
belong to $L^p(\Omega).$ If $\Omega=\rn,$ then any Sobolev function $u$ 
satisfies the pointwise inequality 
\begin{equation}\label{af}
|u(x)-u(y)|\le C|x-y|\left(
M[ |\nabla u|]
(x)+
M[|\nabla u|]
(y)\right )
\end{equation}
at Lebesgue points of $u,$ where 
$M[|\nabla u|]$ 
is the Hardy-Littlewood maximal function
of $|\nabla u|,$ see \cite{AandF, BandH, Pitor, Lewis}. 
Motivated by this, P. Haj\l asz
introduced in \cite{Pitor} the space $M^{1,p}(\Omega)$ consisting of all 
those $u\in L^p(\Omega)$
for which there exists a set $E\subset \Omega$ of $n$-measure zero and a function 
$
0\leq g
\in L^p(\Omega)$ so that
\begin{equation}\label{maar}
|u(x)-u(y)|\le |x-y|\left(g(x)+g(y)\right)
\end{equation}
whenever $x,y\in \Omega\setminus E.$ 

One has
$M^{1,p}(\rr^n)= W^{1,p}(\rr^n)$ as sets for $1<p\le \infty,$ and the norms
are comparable once $M^{1,p}(\rn)$ is equipped with the natural norm. Also,
for $1\le p\le\fz$, one always has 
$M^{1,p}(\Omega)\subset W^{1,p}(\Omega)$ and the inclusion
is strict for $p=1$ for any domain $\Omega$, see \cite{ks}.

A natural question to ask is:
\begin{center}
\textit{For which domains $\boz\subset\rn$ do we have $M^{1,p}(\boz)=W^{1,p}(\boz)$?}
\end{center}

 
Indeed, these two spaces coincide if 
there is a bounded extension operator from 
$W^{1,p}(\Omega)$ into $W^{1,p}(\rn),$ for a given $1<p\le \fz$.  When
$p=\fz$ and $\Omega$ is bounded, this is the case if $\Omega$ is quasiconvex and actually the equality is equivalent to quasiconvexity under these assumptions. This follows from
 \cite[Theorem 7]{hkt}. Moreover, for $1<p<\infty$, under the assumption that
\begin{equation}\label{eq:measdens}
|B(x,r)|\le C|B(x,r)\cap \Omega|
\end{equation}
for every $x\in \Omega$ and every $0<r<1,$ where $|\cdot|$ refers to $n$-measure, $M^{1,p}(\boz)=W^{1,p}(\boz)$ implies the existence of such an extension operator.
 Indeed, in this case the spaces coincide precisely when such an extension
operator exists. For this see \cite{hkt}. Using this fact, it is easy to exhibit
domains $\Omega$ for which $M^{1,p}(\Omega)= W^{1,p}(\Omega)$ 
fails for all $p;$ e.g. take $\Omega\subset\rr^2$ to be the unit disk minus the interval
$[0,1)$ on the real axis. 


In this paper, we consider this question for cuspidal domains of the form
\begin{equation}\label{cusp}
\boz_\psi:=\lf\{(t, x)\in (0, 1)\times\rr^{n-1}\setcolon |x|<\psi(t) \r\}\cup\{(t, x)\in[1,2)\times\rr^{n-1}\setcolon |x|<\psi(1)\},
\end{equation}
where $\psi\colon (0,1]\to (0,\fz)$ is a left continuous increasing function. (Left continuity
is required just to get $\boz_\psi$ open. The term ``increasing'' is used in the non-strict sense.) The seemingly strange cylindrical annexes are included only  to exclude other singularities
than the cuspidal one. 
It is crucial to note that these domains will not, except for limited special cases, be Sobolev extension domains, and thus the methods from \cite{hkt} do not apply.


\begin{figure}[h!tbp]
\centering
\includegraphics[width=0.5\textwidth]
{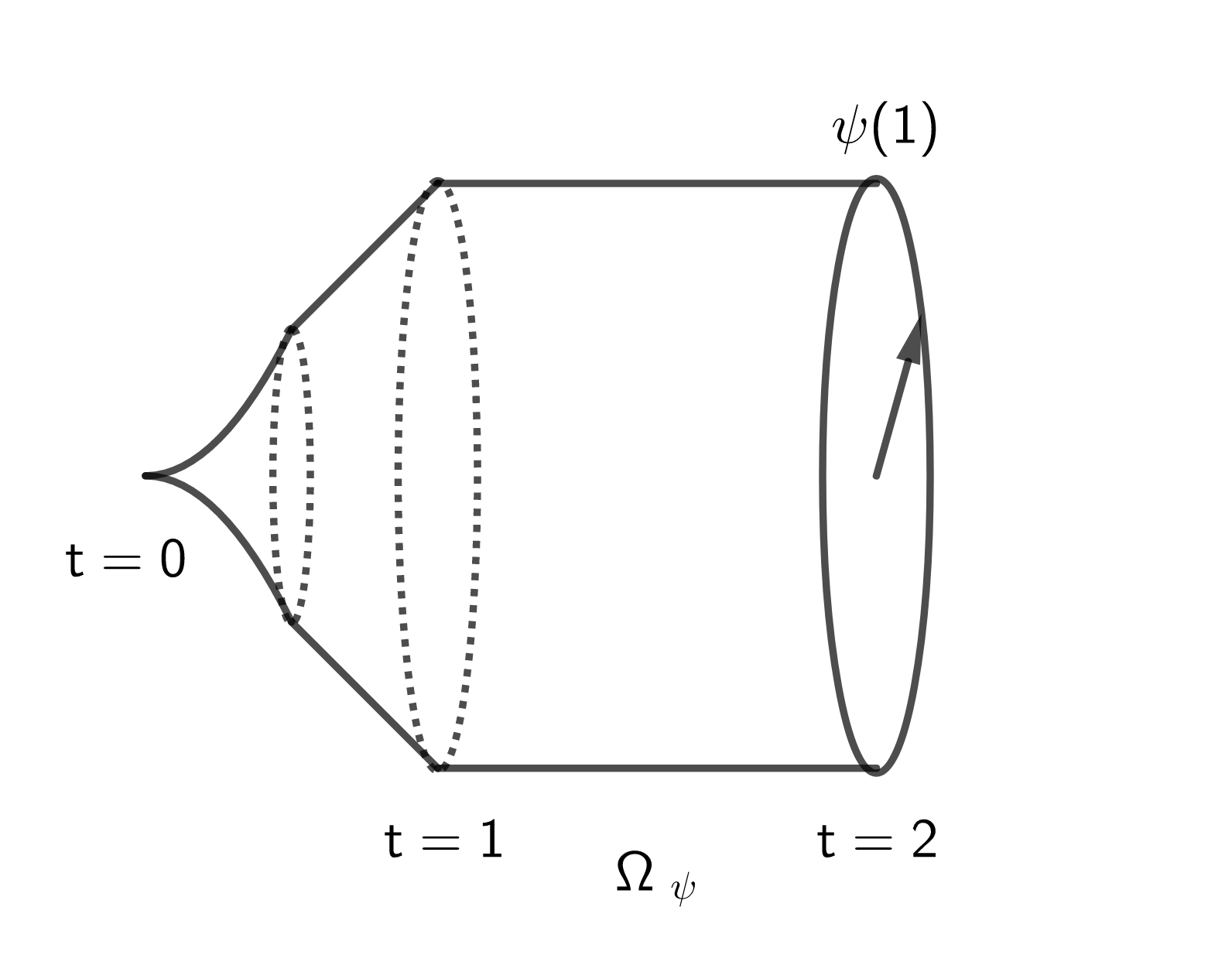}
\label{fig:cusps}
\end{figure}

It is easy to check that $\boz_\psi\subset\rn$ is a domain.  If $\lim_{t\to 0}\frac {\psi(t)}{t}=0,$ then the measure density condition \eqref{eq:measdens} fails, and hence, by \cite{hkt}, there can not exist any bounded extension operator from $W^{1, p}(\boz_\psi)$ to $W^{1,p}(\rn)$. However, according to a somewhat surprising result by A.S. Romanov \cite{Romanov}, one still has
$W^{1,p}(\boz_\psi)=M^{1,p}(\boz_\psi)$ if $\psi(t)=t^s$ with
 $s>1$ and $p>\frac{1+(n-1)s}{n}.$ Actually, Romanov proved this statement for a domain which is bi-Lipschitz equivalent to $\boz_\psi$ when $\psi(t)=t^s,$ but bi-Lipschitz transforms preserve both Sobolev and Haj\l asz-Sobolev spaces.
 
We show that the above restriction on $p$ is superfluous and 
that $\psi$ being of the form $\psi(t)=t^s$ can be relaxed to being any
 left continuous
 increasing function. 

\begin{theorem}\label{thm:main}
Let $\psi:(0, 1]\to(0, \fz)$ be a left continuous increasing function. 
Define the corresponding cuspidal domain $\boz_\psi$ as in \eqref{cusp}. Then $W^{1,p}(\boz_\psi)=M^{1,p}(\boz_\psi)$ for all $1<p\leq \infty$ with equivalence of norms.
\end{theorem}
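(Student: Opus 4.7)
The inclusion $M^{1,p}(\Omega_\psi) \subset W^{1,p}(\Omega_\psi)$ is classical, so it suffices to construct, for each $u \in W^{1,p}(\Omega_\psi)$, a nonnegative function $g \in L^p(\Omega_\psi)$ satisfying \eqref{maar} off a null set and with $\|g\|_{L^p(\Omega_\psi)} \lesssim \|\nabla u\|_{L^p(\Omega_\psi)}$. My candidate is $g = C \cdot M_{\Omega_\psi}|\nabla u|$, where
\[
M_{\Omega_\psi} f(P) := \sup_{r>0} \frac{1}{|B(P,r)|} \int_{B(P,r) \cap \Omega_\psi}|f|,
\]
bounded on $L^p(\Omega_\psi)$ for $p>1$ by comparison with the usual Hardy--Littlewood maximal function on $\R^n$. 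The classical telescoping-of-averages proof of \eqref{af} gives \eqref{maar} with this $g$ whenever $P$ and $Q$ share a ball of radius at most $C|P-Q|$ contained in $\Omega_\psi$, so the proof reduces to a geometric construction in the remaining ``hard'' cases.

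The hard pairs $P,Q$ are exactly those sitting simultaneously close to the lateral cuspidal boundary at a common height $t$ with $\psi(t) \lesssim |P-Q|$. For these I plan to build an intrinsic chain of balls $B_0, B_1, \dots, B_k \subset \Omega_\psi$ with $P \in 2B_0$, $Q \in 2B_k$, consecutive balls overlapping with comparable radii, $\sum_i r(B_i) \lesssim |P-Q|$, and each $B_i$ contained in either $B(P,Cr(B_i))$ or $B(Q,Cr(B_i))$. The monotonicity of $\psi$ is the structural input that makes this possible: from a point at height $t$ near the lateral boundary one can translate slightly in the $+t$-direction into a strictly wider cross-section, obtaining admissible room for a ball of the desired radius, and the increasing property of $\psi$ then keeps the shifted ball inside $\Omega_\psi$ on its $+t$-side rather than forcing it out. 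The chain doubles its radius as it ascends the cusp from $P$ to a pivot at the maximal scale $\sim|P-Q|$, traverses the resulting wider cross-section, and symmetrically descends to $Q$; because the radii double, both $\sum_i r(B_i)$ and the distance of each $B_i$ to its associated endpoint are of order $|P-Q|$, while the doubling association to $P$ or $Q$ guarantees that each $\fint_{B_i}|\nabla u|$ is captured by $g(P)$ or $g(Q)$.

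Once the chain is available, the standard telescoping
\[
|u(P) - u(Q)| \le |u(P) - u_{B_0}| + \sum_{i=0}^{k-1}|u_{B_i} - u_{B_{i+1}}| + |u_{B_k} - u(Q)|,
\]
combined with the $(1,1)$-Poincar\'e inequality on each $B_i \subset \Omega_\psi$ and the endpoint association, yields $|u(P)-u(Q)| \lesssim |P-Q|(g(P)+g(Q))$ at every pair of Lebesgue points. I expect the main obstacle to be the chain-construction lemma in full generality: since $\psi$ is only assumed left-continuous and increasing, with no Lipschitz, H\"older, or even doubling regularity, the cusp may pinch arbitrarily fast, and one must argue carefully (likely through a Whitney-type decomposition adapted to the $+t$-shift mechanism described above) to keep $\sum_i r(B_i) \lesssim |P-Q|$ with a constant independent of $\psi$. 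The parameter $p$ enters only through the $L^p$-boundedness of $M_{\Omega_\psi}$, so the geometric construction is $p$-independent; this is what should allow the Romanov threshold $p>(1+(n-1)s)/n$ to be removed and, simultaneously, the special form $\psi(t)=t^s$ to be replaced by an arbitrary increasing $\psi$.
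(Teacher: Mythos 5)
There is a genuine gap, and it sits exactly at the point you flag as the main obstacle --- but the failure is more basic than a technicality in the chain lemma. The pairs you classify as ``hard'' (both points near the lateral boundary at a common height $t$) are in fact the tractable ones: there your $+t$-shift mechanism works, since monotonicity of $\psi$ lets a pivot ball of radius $\sim\psi(t)\sim|P-Q|$ sit on the axis at height $t+2\psi(t)$. The pairs that defeat the scheme are those separated \emph{along the axis} of the cusp, e.g.\ $P=(t,0)$ and $Q=(2t,0)$ with $t$ small and $\psi(t)\ll t$ (already for $\psi(t)=t^{s}$, $s>1$). These are not covered by your easy case (no ball of radius $\lesssim|P-Q|=t$ containing both points fits in $\Omega_\psi$), and your chain cannot exist for them. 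Indeed, at the index where the chain switches from $P$-associated to $Q$-associated balls, two overlapping balls of comparable radius $r$ lie within distance $Cr$ of $P$ and of $Q$ respectively, which forces $r\geq c\,|P-Q|$; on the other hand a ball of radius $r$ contained in $\Omega_\psi\cap B(P,Cr)$ lies in the cylinder $(0,t+Cr)\times B^{n-1}(0,\psi(t+Cr))$ and hence satisfies $r\leq\psi(t+Cr)$. For $\psi(t)=t^{s}$ and $t$ small this is incompatible with $r\sim t$, and escaping to $r\gg t$ violates $\sum_i r(B_i)\lesssim|P-Q|$. Your chain condition is essentially the John/uniform-domain condition, and $\Omega_\psi$ fails to be John precisely because of these axial pairs --- the same failure that destroys the extension property. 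Nor can you fix this by allowing $B_i\subset B(P,\lambda_i)$ with $\lambda_i\gg r(B_i)$: since your maximal function is normalized by the full volume $|B(P,r)|$, the Poincar\'e step on $B_i$ then costs a factor $(\lambda_i/r(B_i))^{n}$, unbounded along the cusp; normalizing instead by $|B(P,r)\cap\Omega_\psi|$ repairs that step but loses the $L^p$-boundedness of the maximal operator, because $\Omega_\psi$ with Lebesgue measure is not doubling when $\psi(t)/t\to0$.

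The paper avoids this dead end by abandoning isotropic balls altogether. It joins $z_1=(t_1,x_1)$ to $z_2=(t_2,x_2)$ by an L-shaped path: vertical segments $[t_i,s]\times\{x_i\}$, which lie in $\Omega_\psi$ for $s\geq t_i$ by monotonicity of $\psi$ no matter how fast the cusp pinches, and one horizontal move at an intermediate height $s$, averaged over $s$ in an interval of length $\sim|z_1-z_2|$ above $t_2$. The vertical moves are controlled by a one-dimensional maximal function $M^\tau$ in the $t$-variable; the horizontal move is controlled by applying the classical pointwise inequality slice-wise to an extension $\tilde u(s,\cdot)$ of $u(s,\cdot)$ from $B^{n-1}(0,\psi(s))$ to $\rr^{n-1}$, which produces the composition $M^\tau[M^\chi[|\nabla^\chi\tilde u|]]$. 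Each factor is $L^p$-bounded for $p>1$ by Fubini and the lower-dimensional Hardy--Littlewood theorem, and the slice-wise reflection extension costs only a uniform constant. If you want to retain a chaining picture, the chain must be made anisotropic in exactly this sense; a single restricted isotropic maximal function cannot yield the Haj\l asz inequality on these domains.
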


As a consequence of the bi-Lipschitz invariance stated above,  the conclusion $M^{1,p}(\Omega)= W^{1,p}(\Omega)$ then holds
for all bi-Lipschitz images of $\Omega_{\psi}$. Thus, our result covers the result obtained by Romanov.

\section{Definitions and Preliminaries}

In what follows, $\boz\subset\rn$ is always a domain. We write 
\begin{equation}
\rn=\rr\times\rr^{n-1}:=\{z:=(t, x)\in\rr\times\rr^{n-1}\}\, .\nonumber
\end{equation}
Throughout the paper, we consider
a left continuous increasing function $\psi\colon (0,1]\to (0,\fz)$,
extend the definition of $\psi$ to the interval $(0, 2)$ by setting 
\begin{equation}
\psi(t)=\psi(1), \ \ {\rm for\ every}\ \ t\in (1,2) \nonumber
\end{equation}
and write
\begin{equation}
\boz_\psi=\{(t, x)\in (0, 2)\times\rr^{n-1}\setcolon |x|<\psi(t)\}\, .\nonumber
\end{equation}

Typically, $c$ or $C$ 
will be constants that depend on various parameters and may differ even on the same line of inequalities. 
The Euclidean distance between points $x,y$ in the Euclidean space $\rn$ is denoted by $|x-y|$. The open 
$m$-dimensional
ball of radius $r$ centered at the point $x$ is denoted by $B^{m}(x,r)$.

The space of locally integrable functions is denoted by $L^1_{\rm loc}(\boz)$. 
For every measurable set $Q\subset\rr^n$ with $0<|Q|<\infty$, and every 
non-negative measurable or integrable function $f$ on $Q$ we define the integral average of $f$ over $Q$ by 
\begin{equation}
\vint_{Q}f(w)\, dw:=\frac{1}{|Q|}\int_{Q}f(w)\, dw\, .\nonumber
\end{equation} 

 
Let us give the definitions of Sobolev space $W^{1,p}(\boz)$ and Haj\l asz-Sobolev space $M^{1,p}(\boz)$.
\begin{defn}\label{Sobolev}
We define the first order Sobolev space $W^{1,p}(\boz)$, $1\leq p\leq \fz$, as the set 
\begin{equation}
\left\{\, u\in\lp \setcolon \nabla u\in L^p(\boz\setcolon\rn)\, \right\}\, .\nonumber
\end{equation}
Here $\nabla u=\left(\frac{\partial u}{\partial x_1}\, ,\, \dots \, ,\,\frac{\partial u}{\partial x_n}\right)$ is the weak (or distributional) gradient of a locally integrable function $u$. 
\end{defn}
We equip $W^{1,p}(\boz)$ with the non-homogeneous norm:
\begin{equation}
\|u\|_{\wp}=\|u\|_{L^p(\boz)}+\||\nabla u|\|_{L^p(\boz)}\nonumber
\end{equation}
for $1\leq p<\fz$, and 
\begin{equation}
\|u\|_{W^{1,\fz}(\boz)}=\|u(z)\|_{L^\infty(\boz)}+\||\nabla u(z)|\|_{L^\infty(\Omega)}\, .\nonumber,
\end{equation}
where $\|f\|_{L^p(\boz)}$ denotes the usual $L^p$-norm for $p \in [1,\infty]$.

For $u\in\lp$, we denote by $\mathcal{D}_p(u)$ the class of functions $0\leq g\in\lp$ for which there exists $E\subset\boz$ with $|E|=0$, so that 
\begin{equation}
|u(z_1)-u(z_2)|\leq |z_1-z_2|\left(g(z_1)+g(z_2)\right), \ \ {\rm for} \ \ z_1, z_2\in\boz\setminus E\, .\nonumber
\end{equation}
\begin{defn}\label{hajlasz}
We define the Haj\l asz-Sobolev space $M^{1,p}(\boz)$, $1\leq p\leq \fz$, as the set 
\begin{equation}
\left\{u\in\lp, \mathcal{D}_p(u)\neq\emptyset\right\}\, .\nonumber
\end{equation}
\end{defn}
We equip $M^{1,p}(\boz)$ with the non-homogeneous norm:
\begin{equation}
\|u\|_{\mp}=\|u\|_{\lp}+\inf_{g\in\mathcal{D}_p(u)}\|g\|_{\lp}\, .\nonumber
\end{equation}
for $1\leq p<\fz$, and 
\begin{equation}
\|u\|_{M^{1,\fz}(\boz)}=\|u(z)\|_{L^\infty(\boz)}+\inf_{g\in\mathcal{D}_p(u)} \| g(z) \|_{L^\infty(\boz)}\, .\nonumber
\end{equation}

\section{Maximal functions}


We will define two maximal functions. The first, $M^{\tau}[f]$, will vary only the first component $t$, and the second $M^{\chi}[f]$ will vary the $x$-component. 
For every $x\in B^{n-1}(0,\psi(1))$ set 
\begin{equation}
S_x:=\{t\in\rr\setcolon (t,x)\in
\Omega_\psi
\} .\nonumber
\end{equation}
Let
 $f\colon\Omega_{\psi}\to\rr$ be
measurable and let $(t, x)\in \Omega_\psi$. We define the one-dimensional maximal function in the direction of the first variable by setting
\begin{equation}\label{t-max fun}
M^{\tau}[f](t, x):=\sup_{[a,b]\ni t}\vint_{[a, b]\cap S_x}|f(s,x)|\, ds\, .
\end{equation}
The supremum is taken over all intervals $[a,b]$ containing $t$. 

On the other hand, the second maximal function will be defined for functions 
$f\colon (0,2)\times \rr^{n-1}\to\rr$.
 For every point $(t,x)\in(0,2)\times \rr^{n-1}$, we define the $(n{-}1)$-dimensional maximal function $M^{\chi}[f]$ by setting
\begin{equation}\label{t-Max fun}
M^{\chi}[f](t, x):=\sup_{B^{n-1}(x', r)\ni x}\vint_{B^{n-1}(x', r) }|f(t,y)|\, dy\, ,
\end{equation}
where we take the supremum over the $(n{-}1)$-dimensional balls for 
which $x\in B^{n-1}(x', r)$. 
The 
next lemmas tell us that both $M^\tau$ and $M^{\chi}$ enjoy the 
usual $L^p$-boundedness property.
\begin{lemma}\label{lem:M}
Let $1<p<\fz$. Then for every $f\in L^p(\boz_\psi)$, $M^\tau[f]$ is measurable and we have 
\begin{equation}\label{equa1}
\int_{\Omega_\psi }\lf|M^\tau[f](z)\r|^p\, dz\leq C\int_{\Omega_\psi }\lf|f(z)\r|^p\, dz\, ,
\end{equation}
where the constant $C$ is independent of $f$. 
\end{lemma}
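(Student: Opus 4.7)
The plan is to reduce the claim to the classical $L^p$-boundedness of the one-dimensional Hardy--Littlewood maximal function by slicing in the $t$-variable and applying Fubini's theorem. First, extend $f$ by zero outside $\boz_\psi$ to a measurable function $\tilde f$ on $\rr\times\rr^{n-1}$. Because $\psi$ is left continuous and non-decreasing, for each fixed $x\in\rr^{n-1}$ the slice $S_x=\{t\in(0,2)\colon |x|<\psi(t)\}$ is an open interval $(t_x,2)$ (possibly empty), and for $(t,x)\in\boz_\psi$ we have $t\in S_x$.

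The key observation is that in the definition \eqref{t-max fun} the supremum over intervals $[a,b]\ni t$ may be replaced by a supremum over intervals $[a',b']\subset S_x$ containing $t$: given any $[a,b]\ni t$, the intersection $[a,b]\cap S_x$ is itself an interval containing $t$ and the resulting average is unchanged. This yields the pointwise domination
\begin{equation*}
M^\tau[f](t,x)\leq \sup_{I\ni t}\vint_{I}|\tilde f(s,x)|\,ds =: \mathcal{M}[\tilde f(\cdot,x)](t),
\end{equation*}
where $\mathcal{M}$ is the classical uncentered one-dimensional Hardy--Littlewood maximal operator on $\rr$. This reduction is the heart of the argument; once it is in place, everything else is routine.

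Combining this with Fubini's theorem and the classical one-dimensional $L^p$-maximal inequality applied for a.e.\ fixed $x$, one obtains
\begin{equation*}
\int_{\boz_\psi}|M^\tau[f](z)|^p\,dz \leq \int_{\rr^{n-1}}\int_{\rr}|\mathcal{M}[\tilde f(\cdot,x)](t)|^p\,dt\,dx \leq C_p\int_{\rr^{n-1}}\int_{\rr}|\tilde f(s,x)|^p\,ds\,dx = C_p\int_{\boz_\psi}|f(z)|^p\,dz,
\end{equation*}
with $C_p$ depending only on $p$. Measurability of $M^\tau[f]$ on $\boz_\psi$ follows by restricting the defining supremum to pairs of rational endpoints $a<b$ with $a<t<b$: each numerator $\int_a^b|\tilde f(s,x)|\,ds$ is measurable in $x$ by Fubini and independent of $t$; each denominator $|[a,b]\cap S_x|$ is a measurable function of $|x|$ alone through the endpoint $t_x$; the indicator $\chi_{a<t<b}$ is measurable in $t$; and a countable supremum of jointly measurable functions is measurable. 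The only step that requires genuine care is the identification of $M^\tau[f]$ with a restriction of the standard one-dimensional maximal function to the slice $S_x$; once this is in hand, the asserted $L^p$-bound is immediate.
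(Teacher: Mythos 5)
Your proof is correct and takes essentially the same approach as the paper: slice in the $x$-variable, reduce to the classical one-dimensional Hardy--Littlewood maximal inequality on each slice $S_x$, and reassemble with Fubini's theorem, with measurability handled by restricting to rational endpoints. The only cosmetic difference is that you extend $f$ by zero and dominate $M^\tau[f](\cdot,x)$ by the uncentered maximal operator on all of $\rr$, whereas the paper applies the maximal operator on the interval $S_x$ directly; both yield the same constant depending only on $p$.
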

\begin{proof}
Since the maximal function comes out the same if we consider only segments with rational endpoints,
it preserves measurability.
Fubini's theorem implies that $f(\cdot,x)\in L^p(S_x)$ for almost every $x\in B^{n-1}(0,\psi(1))$. By the $L^p$-boundedness of the classical Hardy-Littlewood maximal function on the interval $S_x$, for such $x$ 
we have 
\begin{equation}\label{equa:Lpbound}
\int_{S_x}\lf|M^{\tau}[f](t,x)\r|^p\, dt\leq C\int_{S_x}\lf|f(t,x)\r|^p\, dt,
\end{equation}
where the constant $C$ is independent of $f$ and $x$. By combining the inequality (\ref{equa:Lpbound}) and Fubini's theorem together, we obtain 
\begin{eqnarray}
\int_{\boz_\psi}\lf|M^\tau[f](t, x)\r|^p\, dx\, dt
&=&\int_{B^{n-1}(0,\psi(1))}\int_{S_x}\lf|M^\tau[f](t, x)\r|^p\, dt\, dx\nonumber\\
&\leq&C\int_{B^{n-1}(0,\psi(1))}\int_{S_x}\lf|f(t, x)\r|^p\, dt\, dx\nonumber\\
&=&C\int_{\boz_\psi}\lf|f(t, x)\r|^p\, dx\, dt\, .\nonumber
\end{eqnarray}
\end{proof}
\begin{lemma}\label{lem:M2}
Let $1<p<\fz$. Then for every $f\in L^p((0,2)\times\rr^{n-1})$, 
$M^\chi[f]$ is measurable and we have 
\begin{equation}\label{equa2}
\int_{(0,2) \times \R^{n-1}}\lf|M^{\chi}[f](z)\r|^p\, dz\leq C\int_{(0,2) \times \R^{n-1}} \lf|f(z)\r|^p\, dz\, ,
\end{equation}
where the constant $C$ is independent of $f$. 
\end{lemma}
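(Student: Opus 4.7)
The plan is to mirror the proof of Lemma \ref{lem:M}, exploiting the fact that $M^{\chi}[f]$ is a \emph{slicewise} maximal function: for each fixed $t$, it is literally the classical Hardy--Littlewood maximal function of $f(t,\cdot)$ on $\R^{n-1}$. Since the ambient space in the $x$-variable is all of $\R^{n-1}$ (and not a domain depending on $t$, as was the case for $S_x$ in Lemma \ref{lem:M}), no cutting or restriction is needed and the argument is actually cleaner.

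First I would handle measurability. Because $f(t,\cdot)$ is continuous in $y$ in the sense of integral averages over balls, one can restrict the supremum in \eqref{t-Max fun} to $(n{-}1)$-dimensional balls $B^{n-1}(x',r)$ with rational center $x' \in \mathbb{Q}^{n-1}$ and rational radius $r \in \mathbb{Q} \cap (0,\infty)$ satisfying $|x-x'|<r$. This makes $M^{\chi}[f]$ a countable supremum of functions that are jointly measurable in $(t,x)$ via Fubini applied to $(t,y) \mapsto |f(t,y)| \chi_{B^{n-1}(x',r)}(y)$, hence $M^{\chi}[f]$ is measurable on $(0,2)\times\R^{n-1}$.

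Second, I would establish the $L^p$-bound slicewise. By Fubini's theorem, for almost every $t\in(0,2)$ the slice $f(t,\cdot)$ lies in $L^p(\R^{n-1})$. For such $t$, the classical Hardy--Littlewood maximal inequality on $\R^{n-1}$ gives
\begin{equation}
\int_{\R^{n-1}} \bigl|M^{\chi}[f](t,x)\bigr|^p\, dx \le C \int_{\R^{n-1}} |f(t,x)|^p\, dx,\nonumber
\end{equation}
with a constant $C=C(n,p)$ independent of $t$ and $f$, since $M^{\chi}[f](t,\cdot)$ is precisely the standard centered/uncentered HL maximal function of $f(t,\cdot)$ (the two are comparable, so the choice of requiring $x\in B^{n-1}(x',r)$ rather than $x=x'$ affects only the constant).

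Finally, I would integrate in $t$ and apply Fubini once more:
\begin{equation}
\int_{(0,2)\times\R^{n-1}} \bigl|M^{\chi}[f](z)\bigr|^p \, dz = \int_0^2 \int_{\R^{n-1}} \bigl|M^{\chi}[f](t,x)\bigr|^p \, dx\, dt \le C \int_{(0,2)\times\R^{n-1}} |f(z)|^p \, dz.\nonumber
\end{equation}
There is no real obstacle here; the only subtlety to keep in mind is to restrict the supremum in \eqref{t-Max fun} to a countable family when checking measurability, and to note that the constant from the classical HL theorem on $\R^{n-1}$ is independent of the slice $t$, so integrating in $t$ preserves the bound.
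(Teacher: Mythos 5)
Your proposal is correct and follows essentially the same route as the paper: measurability via restricting the supremum to balls with rational centers and radii, then the slicewise Hardy--Littlewood maximal inequality on $\R^{n-1}$ with a constant independent of $t$, combined with Fubini's theorem in both directions. No gaps.
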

\begin{proof}
Again, the maximal function preserves measurability, as
it comes out the same if we consider only balls with rational centers and radii
(a point is rational if all its coordinates are rational).
By Fubini's theorem,  $f(t,\cdot)\in L^{p}(\rr^{n-1})$ 
for almost every $t \in (0,2)$. By the $L^p$-boundedness of the Hardy-Littlewood maximal operator we have
\begin{equation}
\int_{\rr^{n-1}}\lf|M^{\chi}[f](t, x)\r|^p\, dx\leq C\int_{\rr^{n-1}}\lf|f(t,x)\r|^p\, dx\, ,\nonumber
\end{equation}
where the positive constant $C$ is independent of $f$ and $t$. Then  Fubini's theorem gives 
\begin{eqnarray}
\int_{(0,2) \times \R^{n-1}}\lf|M^{\chi}[f](z)\r|^p\, dz&=&\int_0^2\int_{\rr^{n-1}}\lf|M^{\chi}[f](t, x)\r|^p\, dx\, dt\nonumber\\
 &\leq&C\int_0^2\int_{\rr^{n-1}}\lf|f(t, x)\r|^p\, dx\, dt\nonumber\\
&\leq&C\int_{(0,2) \times \R^{n-1}}\lf|f(z)\r|^p\, dz\, .\nonumber
\end{eqnarray}
\end{proof}

\section{Proof of the Main theorem}

Let us begin by sketching a simple proof for Theorem \ref{thm:main} in the Euclidean plane $\rr^2$,
for $1<p<\infty$.
In this case the maximal function $M^{\chi}[f]$, with respect to $x$-coordinate, can be replaced by 
\begin{equation}\label{equa:maxix}
\tilde M^{\chi}[f](t, x):=\sup_{[z, w]\ni x}\vint_{\{y\in [z, w]\setcolon (t,y)\in\boz_\psi\}}|f(t, y)|\, dy\, ,
\end{equation}
for every $(t, x)\in\boz_\psi$. As in Lemma \ref{lem:M} we obtain 
\begin{equation}\label{equa3}
\int_{\boz_\psi}|\tilde M^{\chi}[f](z)|^p\, dz\leq C\int_{\boz_\psi}|f(z)|^p\, dz\, .
\end{equation}
By \cite{Pitor}, there is a bounded inclusion $\iota \co M^{1,p}(\boz_\psi) \hookrightarrow W^{1,p}(\boz_\psi)$.  To show that $\iota$ is an isomorphism, it suffices to show that its inverse $\iota^{-1}$ is both densely defined and bounded on $W^{1,p}(\boz_\psi)$. Let $C^1(\boz_\psi)$ be the set of continuously differentiable functions. Since $C^1(\boz_\psi)\cap W^{1,p}(\boz_\psi)$ is dense in $W^{1,p}(\boz_\psi)$, it suffices to show that $C^1(\boz_\psi) \cap W^{1,p}(\boz_\psi) \subset M^{1, p}(\boz_\psi)$ and that for each $u \in C^1(\boz_\psi) \cap W^{1,p}(\boz_\psi)$ we have $||u||_{M^{1,p}(\Omega_\psi)} \lesssim ||u||_{W^{1,p}(\boz_\psi)}$. 

Fix $u\in C^1(\boz_\psi)\cap W^{1,p}(\boz_\psi)$. Let $z_1:=(t_1, x_1), z_2:=(t_2, x_2)\in\boz_\psi$ be arbitrary. Without loss of generality, we assume $0<t_1\leq t_2<2$. From the definition of $\boz_\psi$, the point $z':=(t_2, x_1)$ is also in $\boz_\psi$. Using the triangle inequality, we have 
\begin{equation}\label{equaa1}
|u(z_1)-u(z_2)|\leq |u(z_1)-u(z')|+|u(z')-u(z_2)|\, .
\end{equation}
Since $u\in C^1(\boz_\psi)\cap W^{1,p}(\boz_\psi)$, the fundamental theorem of calculus implies 
\begin{equation}\label{equaa2}
|u(z_1)-u(z')|\leq \int_{t_1}^{t_2}
|\nabla u(s,x_1)|ds\leq |z_1-z_2|M^\tau[|\nabla u|](z_1)
\end{equation}
and 
\begin{equation}\label{equaa3}
|u(z')-u(z_2)|\leq \int_{x_1}^{x_2}|\nabla u(t_2,y)|dy\leq |z_1-z_2|
\tilde M^{\chi}
[|\nabla u|](z_2)\, .
\end{equation}
Combining inequalities (\ref{equaa1}), (\ref{equaa2}) and (\ref{equaa3}) together, we have 
\begin{equation}
|u(z_1)-u(z_2)|\leq |z_1-z_2|\lf(M^{\tau}[|\nabla u|](z_1)+\tilde M^{\chi}[|\nabla u|](z_2)\r) \leq |z_1-z_2|(g(z_1) + g(z_2))\, ,\nonumber
\end{equation}
where
\begin{equation}
g(z):=M^\tau[|\nabla u|](z)+\tilde M^{\chi}[|\nabla u|](z)\, .\nonumber
\end{equation}
By inequalities (\ref{equa1}) and (\ref{equa3}), we have 
\begin{equation}
\int_{\boz_\psi}|g(z)|^pdz\leq C\int_{\boz_\psi}|\nabla u(z)|^p\, dz\nonumber
\end{equation}
which immediately gives that $g \in \mathcal{D}_p(u)$, and $\| u\|_{
M^{1,p}(\boz_\psi)} \leq C\| u\|_{
W^{1,p}(\boz_\psi)}$.

In higher dimensions, we have to work harder. Let us fix some notation.  


Let $\eta\colon \rr^{n-1}\to\rr$ be a smooth cut-off function such that
$\eta=1$ on $B^{n-1}(0,1)$ and $\eta=0$ on the complement of $B^{n-1}(0,2)$.
Consider the standard extension operator $E^R\colon W^{1,p}(B^{n-1}(0,R))\to  W^{1,p}(\rr^{n-1})$
given by
$$
E^Ru(x)=
\begin{cases}
u(x),&|x|<R,\\
0,&|x|=R,\\
u\big(\frac{R^2}{|x|^2}\,x\big)\eta\big(\frac{x}{R}\big),&|x|>R.
\end{cases}
$$
Then
\begin{equation}\label{extension}
\|\nabla E^Ru\|_{L^p(\rr^{n-1})}\le C\|\nabla u\|_{L^p(B^{n-1}(0,R))}
\end{equation}
with $C$ independent of $u$ and $R$.

Let $u\in W^{1,p}(\boz_\psi)$ be arbitrary, $1<p<\fz$. Extend the function $u$ to $(0,2)\times\rr^{n-1}$
by setting 
\begin{equation}\label{howtildeu}
\tilde u(t,\cdot)=E^{\psi(t)}(u(t,\cdot)),\qquad t\in(0,2).
\end{equation}

Denoting the gradient with respect to the $x$-variable by $\nabla^{\chi}$, 
from \eqref{af} we immediately obtain
\begin{equation}\label{eq:est0}
|\tilde u(z_1)-\tilde u(z_2)|\leq C|z_1-z_2|(M^{\chi}[|\nabla^{\chi} \tilde u|](z_1)+M^\chi[|\nabla^{\chi}\tilde u|](z_2))\, 
\end{equation}
for a.e.\ $t\in (0,2)$ and a.e.\ $z_1$, $z_2\in \{t\}\times \rr^{n-1}$. It is easily seen, when $u \in C^1(\Omega_\psi)$, that the function $\tilde u$ and $\nabla^{\chi} \tilde u$ are measurable on $(0,2)\times\rr^{n-1}$. In fact, it could be shown that both of these would be measurable even if  $u$ were just in $W^{1,p}(\Omega_\psi)$.

Next, we prove the main estimate.
\begin{lemma}\label{l:est1}
Let $z_1=(t_1, x_1), z_2:=(t_2, x_2)\in\boz_\psi$ be two points with $t_1<t_2$.  Suppose that $u \in W^{1,p}(\Omega_{\psi})\cap C^1(\boz_\psi)$ and that $\tilde u$ is its extension given by
\eqref{howtildeu}.
Then we have
\begin{eqnarray}\label{eq:est1}
|u(z_1)- u(z_2)| &\leq& C|z_1-z_2|\big(M^{\tau}[|
\nabla u
|](z_1) \ +\  M^{\tau}[M^\chi[|\nabla^\chi \tilde u|]](z_1) \ +\ \nonumber \\
&&\ \ \ \ \ \ \ \ \  \ \ \ \ \,  M^{\tau}[|
\nabla u
|](z_2) \ +\  M^{\tau}[M^\chi[|\nabla^\chi \tilde u|]](z_2)\big)\,.
\end{eqnarray}
\end{lemma}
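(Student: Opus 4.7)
The plan is to connect $z_1$ and $z_2$ by the $L$-shaped path $z_1\to w\to z_2$ with intermediate point $w:=(t_2,x_1)$; since $|x_1|<\psi(t_1)\le\psi(t_2)$, $w$ lies in $\Omega_\psi$ and both legs stay inside the domain. For the vertical leg, the fundamental theorem of calculus applied to the $C^1$ function $s\mapsto u(s,x_1)$ along $[t_1,t_2]\subset S_{x_1}$ gives
\[
|u(z_1)-u(w)|\le\int_{t_1}^{t_2}|\nabla u(s,x_1)|\,ds\le(t_2-t_1)\,M^\tau[|\nabla u|](z_1)\le|z_1-z_2|\,M^\tau[|\nabla u|](z_1),
\]
using that $[t_1,t_2]$ is admissible in the supremum defining $M^\tau[|\nabla u|](z_1)$.

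For the horizontal leg $|u(w)-u(z_2)|=|\tilde u(t_2,x_1)-\tilde u(t_2,x_2)|$ the estimate \eqref{eq:est0} is only guaranteed on almost every slice, not necessarily at the specific time $t_2$. I would therefore average \eqref{eq:est0} over $s$ in the interval $I:=[a,t_2]$ where $a:=\max(t_1,\inf S_{x_2})$. This choice ensures $I\subset S_{x_1}\cap S_{x_2}$, so that $\tilde u(s,x_i)=u(s,x_i)$ for every $s\in I$ and $i=1,2$. Starting from the pointwise identity
\[
u(t_2,x_1)-u(t_2,x_2)=\bigl(u(t_2,x_1)-u(s,x_1)\bigr)+\bigl(\tilde u(s,x_1)-\tilde u(s,x_2)\bigr)+\bigl(u(s,x_2)-u(t_2,x_2)\bigr),
\]
averaging over $s\in I$, applying \eqref{eq:est0} to the middle term on each slice, and handling the outer two vertical-correction terms by the fundamental theorem of calculus combined with Fubini, reduces matters to estimating $\vint_I M^\chi[|\nabla^\chi\tilde u|](s,x_i)\,ds$ for $i=1,2$. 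The admissible interval $[a,b]=I\ni t_2$ with $I\subset S_{x_2}$ controls the $x_2$-average by $M^\tau[M^\chi[|\nabla^\chi\tilde u|]](z_2)$; similarly, $[a,b]=[t_1,t_2]\supseteq I$ with $[t_1,t_2]\subset S_{x_1}$ is the natural choice for bounding the $x_1$-average by $M^\tau[M^\chi[|\nabla^\chi\tilde u|]](z_1)$.

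The main obstacle will be that in this last step a surplus factor $(t_2-t_1)/|I|$ appears on the $x_1$-side whenever $|I|<t_2-t_1$, i.e., precisely when $|x_2|\ge\psi(t_1)$ so that the cross-section at time $t_1$ does not contain $x_2$. Absorbing this factor into $C|z_1-z_2|$ will rely on exploiting the cuspidal geometry: in that regime one has $|x_1-x_2|\ge|x_2|-|x_1|>|x_2|-\psi(t_1)$, and a case split based on whether the horizontal or the vertical component dominates $|z_1-z_2|$ should suffice to close the argument. Once this is handled, combining the vertical-leg bound with the four averaged horizontal-leg contributions delivers the claimed four-term inequality.
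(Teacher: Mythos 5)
Your overall strategy---average over a range of intermediate times $s$, apply the slice-wise inequality \eqref{eq:est0} to the middle term, and control the vertical corrections by the fundamental theorem of calculus and $M^\tau$---is exactly the paper's. The gap is in the choice of the averaging interval, and as proposed it is fatal. You take $I=[a,t_2]$ with $a=\max(t_1,\inf S_{x_2})$, i.e.\ you average \emph{backward} from $t_2$; whenever $\inf S_{x_2}>t_1$ the length of $I$ is $t_2-\inf S_{x_2}$, which can be arbitrarily small compared with both $t_2-t_1$ and $|x_1-x_2|$. Concretely, with $n=2$, $\psi(t)=t$, $x_1=0$, $t_1=t_2/2$ and $|x_2|=t_2-\ez$, one gets $|I|=\ez$ while $t_2-t_1$ and $|x_1-x_2|$ are both comparable to $t_2$, so the surplus factor $(t_2-t_1)/|I|=t_2/(2\ez)$ is unbounded. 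This factor cannot be removed: the step you need, namely $\vint_{I}M^\chi[|\nabla^\chi\tilde u|](s,x_1)\,ds\leq C\,M^{\tau}[M^\chi[|\nabla^\chi \tilde u|]](z_1)$ with universal $C$, fails when the integrand is concentrated near $t_2$, since every admissible interval in the supremum defining $M^\tau[\cdot](z_1)$ must contain $t_1$ and hence has length at least about $t_2-t_1\gg|I|$. Your proposed rescue---the lower bound $|x_1-x_2|>|x_2|-\psi(t_1)$ plus a case split---goes the wrong way: a \emph{lower} bound on $|x_1-x_2|$ only enlarges the problematic product $|x_1-x_2|\cdot(t_2-t_1)/|I|$ relative to $|z_1-z_2|$.

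The missing idea is to average \emph{forward} in $t$, into the wide part of the domain, where both cross-sections contain $x_1$ and $x_2$. The paper sets $T_2=\min\{2,\,t_2+\frac{t_2-t_1}{2}\}$ and $T_1=T_2-\frac{t_2-t_1}{2}$; then $[T_1,T_2]$ always has length exactly $(t_2-t_1)/2$, contains $t_2$, and satisfies $[T_1,T_2]\times\{x_1,x_2\}\subset\boz_\psi$ (in the capped case one checks $T_1\ge 1$, where $\psi\equiv\psi(1)\ge\psi(t_2)>|x_2|$). With this interval the $x_2$-average is controlled directly by $M^\tau[M^\chi[|\nabla^\chi\tilde u|]](z_2)$, the $x_1$-average by $M^\tau[M^\chi[|\nabla^\chi\tilde u|]](z_1)$ after enlarging $[T_1,T_2]$ to $[t_1,T_2]$ at the cost of a factor $3$, and the vertical corrections cost at most $3(T_2-T_1)\le 2|z_1-z_2|$ times the relevant $M^\tau[|\nabla u|]$. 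Replacing your interval $I$ by this one, the rest of your argument goes through.
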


\begin{proof}
Similarly to the two-dimensional argument, we will compare the change in the function via additional values $\tilde u(s,x_i)$ for some $s\in(0,2)$. Without knowing exactly which $s$ yields an optimal estimate, we will instead average over a range of possible $s$ with the hope that, on average, the differences are better controlled. Indeed, let 
$$
\aligned
T_2&=\min\Big\{2,t_2 + \frac{t_2-t_1}{2}\Big\},\\
T_1&=T_2-\frac{t_2-t_1}{2}.
\endaligned
$$
Notice that $t_2\in[T_1,T_2]$ and $[T_1,T_2]\times \{x_1,x_2\}\subset\Omega_{\psi}$.
When we average over different possible $s \in [T_1, T_2]$ and use the triangle inequality we obtain that
\begin{eqnarray}
\lf|u(z_2)-u(z_1)\r|\leq\underbrace{\lf| \frac{1}{T_2-T_1}\int_{T_1}^{T_2} |u(t_2,x_2)- u(s,x_2)| \, \, ds \r|}_{I} \!\!&+& \!\! \underbrace{ \lf| \frac{1}{T_2-T_1}\int_{T_1}^{T_2}|u(s,x_2) -  u(s,x_1)| \, \, ds \r|}_{II} \nonumber\\
 &+&\!\! \underbrace{\lf| \frac{1}{T_2-T_1}\int_{T_1}^{T_2} |u(s,x_1)-  u(t_1,x_1)| \, \, ds\r|}_{III}\, .\label{equa:est2}
\end{eqnarray}

First, we estimate the terms $I$ and $III$. Let $i\in\{1,2\}$.
If $t_i<s$, by the fundamental theorem of calculus we have
\begin{equation}\label{interior}
|u(t_i,x_i)- u(s,x_i)|\le \int_{t_i}^s|\nabla u(r,x_i)|\,dr\le |t_i-s|M^{\tau}[|\nabla u|](z_i)
\le 3(T_2-T_1) M^{\tau}[|\nabla u|](z_i).
\end{equation}
Similarly, \eqref{interior} holds also if $t_i\ge s$.
Integrating with respect to $s$ we obtain
\begin{equation}\label{eq:1}
I\le 3(T_2-T_1) M^{\tau}[|\nabla u|](
z_2
)\le 2|z_2-z_1| M^{\tau}[|\nabla u|](
z_2
).
\end{equation}
and
\begin{equation}\label{eq:2}
III\le 3(T_2-T_1) M^{\tau}[|\nabla u|](
 z_1
 )\le 2|z_2-z_1| M^{\tau}[|\nabla u|](
 z_1)
\end{equation}
Next, we apply \eqref{eq:est0} to the second term:
\begin{eqnarray}\label{eq:3}
II &\leq&  \frac{C|x_1-x_2|}{T_2-T_1}\int_{T_1}^{T_2} 
(M^{\chi}[|\nabla^\chi \tilde u|](s,x_1)+M^\chi[|\nabla^\chi \tilde u|](s,x_2)) \, \, ds \nonumber \\
&\leq& 
  C|x_1-x_2|\left(\frac{1}{T_2-t_1}\int_{t_1}^{T_2} (M^{\chi}[|\nabla^\chi \tilde u|](s,x_1) \, \, ds +  \frac{1}{T_2-T_1}\int_{T_1}^{T_2} (M^{\chi}[|\nabla^\chi \tilde u|](s,x_2) \, \, ds \right)
 \nonumber 
\\
&\leq&  C|z_1-z_2|\big( M^\tau [M^\chi[|\nabla^\chi \tilde u|]](z_1) + M^\tau [M^\chi[|\nabla^\chi \tilde u|]](z_2) \big) \, .
\end{eqnarray}

Finally, by combining inequalities (\ref{eq:1}), (\ref{eq:2}), (\ref{eq:3}) and (\ref{equa:est2}), we obtain the desired inequality (\ref{eq:est1}).
\end{proof}

Recall that a domain $\Omega$ is quasiconvex if there exists a $C \geq 1$ such that, for every pair of points $x,y \in \Omega$, there is a rectifiable curve $\gamma \subset \Omega$ joining $x$ to $y$ so that ${\rm len}(\gamma) \leq C|x-y|$.

\begin{proof}[Proof of Theorem \ref{thm:main}] Because $\Omega_\psi$ is quasiconvex for every $\psi$, the case of $p=\infty$ is a consequence of \cite[Theorem 7]{hkt}. Thus, fix $1<p<\fz$. By \cite{Pitor}, we know that there is a bounded inclusion $\iota \colon M^{1,p}(\boz_\psi) \hookrightarrow W^{1,p}(\boz_\psi)$.  To show that $\iota$ is an isomorphism it suffices to show that the dense subspace $C^1(\boz_\psi)\cap W^{1,p}(\boz_\psi)$ of $W^{1,p}(\boz_\psi)$ is contained in $M^{1,p}(\boz_\psi)$, and that the restricted inverse $\iota^{-1}|_{C^1(\boz_\psi)\cap W^{1,p}(\boz_\psi)}$ is defined and bounded.

Let $u\in C^{1}(\boz_\psi)\cap W^{1,p}(\boz_\psi)$ be arbitrary, and define $\tilde u$ 
as in \eqref{howtildeu}.
Set
\begin{equation}\label{equa:ud}
\hat g(z)=M^\tau[|\nabla  u|](z)+M^\chi[|\nabla^{\chi} \tilde u|](z)
+M^\tau[M^\chi[|\nabla^{\chi} \tilde u|]](z)\, .
\end{equation}

By \eqref{eq:est0} and Lemma \ref{l:est1},  for every $z_1, z_2\in\boz_\psi$, we get the estimate
\begin{equation}
|u(z_1)-u(z_2)|\leq C|z_1-z_2|(\hat g(z_1)+\hat g(z_2))\, .\nonumber
\end{equation}
Hence (\ref{maar}) holds for $g:=C\hat g$ for a suitable constant $C>1$. The triangle inequality gives
\begin{equation}
\int_{\boz_\psi}|g(z)|^p\, dz\leq C\lf(\int_{\boz_\psi}M^\tau[|\nabla  u|](z)^p\, dz+\int_{\boz_\psi}M^\chi[|\nabla^{\chi} \tilde u|](z)^p\,dz+\int_{\boz_\psi}M^\tau[M^\chi[|\nabla^\chi \tilde u|]](z)^p\, dz\r)\, .\nonumber
\end{equation}
Lemmata \ref{lem:M} and \ref{lem:M2} and \eqref{extension} lead to the estimates
 \begin{equation}
 \int_{\boz_\psi}|M^\tau[|\nabla  u|](z)|^p\, dz\leq C \int_{\boz_\psi}|\nabla  u(z)|^p\, dz\nonumber
 \end{equation}
 and 
 \begin{eqnarray}
 \int_{\boz_\psi}|M^\tau[M^\chi[|\nabla^{\chi}\tilde u|]](z)|^p\, dz&\leq& C\int_{\boz_\psi}M^\chi[|\nabla^{\chi}\tilde u|](z)^p\, dz
 \leq C\int_{(0,2)\times\rr^{n-1}}|\nabla^{\chi}\tilde u(z)|^p\, dz\nonumber\\
 &\leq& C\int_0^2\int_{\rr^{n-1}}|\nabla^{\chi}\tilde u(t,x)|^p\, dx\;dt
 \leq C\int_0^2\int_{B(0,\psi(t))}|\nabla^{\chi} u(t,x)|^p\, dx\;dt\nonumber\\
 &\leq& C\int_{\boz_\psi}|\nabla u(z)|^p\, dz\, ,\nonumber
 \end{eqnarray}
 which imply that $g\in\mathcal{D}_p(u)$ and that $\|u\|_{M^{1,p}(\boz_\psi)} \leq C \|u\|_{W^{1,p}(\boz_\psi)}$. That is, $\iota^{-1}|_{C^1(\boz_\psi)\cap W^{1,p}(\boz_\psi)}$ is both well-defined and bounded.
\end{proof}

\end{document}